\UseRawInputEncoding
\documentclass[12pt]{article}

\usepackage{amsfonts,amsmath,amsthm}
\usepackage{amssymb}
\usepackage{algorithm}
\usepackage{algpseudocode}
\usepackage{xcolor}
\usepackage{graphicx}
\usepackage{stmaryrd}        
\usepackage{multirow}
\usepackage{dsfont}

\usepackage[paper=a4paper,dvips,top=2cm,left=2cm,right=2cm,
foot=1cm,bottom=4cm]{geometry}

\newcommand{\red}[1]{\textcolor{red}{#1}}

\newcommand{\vc}{{\bf c}}

\newcommand{\vx}{{\bf x}}
\newcommand{\vy}{{\bf y}}

\newcommand{\vz}{{\bf z}}

\newcommand{\R}{\mathbb{R}}

\newtheorem{Thm}{Theorem}[section]
\newtheorem{Def}[Thm]{Definition}

\newtheorem{remark}[Thm]{Remark}

\begin{document}
	
\title{The Sum of Squares Rank of Biquadratic Forms and The Zarankiewicz Number}
	\Large
	\author{Chunfeng Cui\footnote{School of Mathematical Sciences, Beihang University, Beijing  100191, China.
			{\tt chunfengcui@buaa.edu.cn})}
		\and
		Liqun Qi\footnote{
			Department of Applied Mathematics, The Hong Kong Polytechnic University, Hung Hom, Kowloon, Hong Kong.
			({\tt maqilq@polyu.edu.hk})}
		\and {and \
			Yi Xu\footnote{School of Mathematics, Southeast University, Nanjing  211189, China. Nanjing Center for Applied Mathematics, Nanjing 211135,  China. Jiangsu Provincial Scientific Research Center of Applied Mathematics, Nanjing 211189, China. ({\tt yi.xu1983@hotmail.com})}
		}
	}

	\date{\today}
	\maketitle

     \begin{abstract}
	Denote the maximum sos rank of $m \times n$ sum of squares (SOS) biquadratic forms by $BSR(m, n)$. In this paper, we show that $BSR(m, n) \ge z(m, n)$ and conjecture that $BSR(m, n) = z(m, n)$, where $z(m, n)$ is the Zarankiewicz number. Our result coincides with the existing results for $m = 2$, $n = 2$, and $m = n = 3$, and is superior to other previously known lower bounds. Our result also connects graph theory and SOS polynomial theory.
	
	\medskip
	
	\textbf{Keywords.} Biquadratic forms, sum-of-squares, positive semi-definiteness, sos rank, bipartite graphs, the Zarankiewicz number.
	
	\medskip
	\textbf{AMS subject classifications.} 14P10, 05C35, 11E25, 15A69, 90C22.
\end{abstract}

	\section{Introduction}

Let $m, n \ge 2$.   Denote $\{1, \ldots, m\}$ as $[m]$.   An $m \times n$ biquadratic form is a quartic homogeneous polynomial of $m+n$ variables, with the form
\begin{equation} \label{e0}
P(\mathbf{x}, \mathbf{y}) = \sum_{i,k=1}^{m} \sum_{j,l=1}^{n} a_{ijkl} x_i x_k y_j y_l,
\end{equation}
where
\begin{equation} \label{e1}
a_{ijkl} = a_{kjil} = a_{klij}
\end{equation}
for $i, k \in [m], j, l \in [n]$, are real numbers.
If $P(\vx, \vy) \ge 0$ for all $\vx \in \Re^m$ and $\vy \in \Re^n$, then we say that $P$ is positive semi-definite (PSD), or nonnegative.   If
\begin{equation} \label{e2}
P(\vx, \vy) = \sum_{p=1}^r f_p(\vx, \vy)^2,
\end{equation}
where each $f_p$ for $p \in [r]$ is an $m \times n$ bilinear form, then $P$ is said to be a sum of squares (SOS).   The smallest value of $r$, such that (\ref{e2}) holds, is called the sos rank of $P$.    By symmetry, we may assume that $m \ge n$ to simplify our discussion.

A PSD biquadratic form may not be SOS.  In 1973, Calder\'{o}n \cite{Ca73} showed that all $m \times 2$ or $2 \times n$ PSD biquadratic forms are SOS.   In 1975, Choi \cite{Ch75} presented a $3 \times 3$ PSD biquadratic form which is not SOS.
In \cite{QCX26}, the following definition was introduced:

\begin{Def}
Let $m, n \ge 2$.  Let $BSR(m, n)$ be the maximum sos rank of $m \times n$ SOS biquadratic forms.
\end{Def}

\begin{itemize}
    \item It is known that \(BSR(m,2) = m+1\). See \cite{BPSV19}.
    \item It is known that \(BSR(3,3) = 6\). See \cite{BSSV22}.
\end{itemize}

In this paper, we show that $BSR(m, n) \ge z(m, n)$ and conjecture that $BSR(m, n) = z(m, n)$, where $z(m, n)$ is the Zarankiewicz number.

In the next three sections, we present some preliminary knowledge on the sos rank of biquadratic forms, and the Zarankiewicz number.   In Section 5, we present the main theorem of this paper.

\section{The Gram Matrices of A Biquadratic Form}
\label{sec:Gram-matrices}

 The Gram matrix method was first developed in \cite{CLR95}.  Let \(P(\vx, \vy)\) be an \(m \times n\) biquadratic form and \(\vz = \vx \otimes \vy \in \R^{mn}\). Then we may write
\(P(\vx, \vy) = \vz^\top M \vz\), where \(M\) is an \(mn \times mn\) matrix. The matrix \(M\) is called a Gram matrix of \(P\). It is not unique. The Gram matrix is named after Danish mathematician J\/{o}rgen Pedersen Gram (1850-1916). We may index \(\vz\) by \((i, j)\) and \((k, l)\). Then a natural Gram matrix is \(M_0 = (a_{ijkl})\).

The non-uniqueness of Gram matrices is important. For any skew-symmetric matrix \(S\) (i.e., \(S^\top = -S\)), adding it to \(M\) does not change the quadratic form because \(\vz^\top S \vz = 0\) for all \(\vz\). More generally, any matrix \(H\) such that \(\vz^\top H \vz = 0\) for all \(\vz = \vx \otimes \vy\) can be added. These matrices form a linear subspace.

If we index \(\vz\) by the pairs \((i,j)\) with \(i\in[m]\) and \(j\in[n]\), then the entries of \(M\) are naturally indexed by \(((i,j),(k,l))\). The symmetry conditions \(a_{ijkl} = a_{kjil} = a_{klij}\) from \eqref{e1} ensure that the natural Gram matrix \(M_0\) is symmetric.

\medskip

\noindent\textbf{SOS and PSD Gram Matrices.}
A fundamental characterization relates SOS decompositions to positive semidefinite Gram matrices:

\begin{quote}
\emph{\(P\) is SOS if and only if there exists a positive semidefinite (PSD) Gram matrix representing \(P\).}
\end{quote}

To see why, first suppose \(P\) is SOS: \(P(\vx,\vy) = \sum_{p=1}^r f_p(\vx,\vy)^2\) with each \(f_p\) bilinear. Write \(f_p(\vx,\vy) = \vc_p^\top \vz\) for some vectors \(\vc_p \in \R^{mn}\). Then
\[
P = \sum_{p=1}^r (\vc_p^\top \vz)^2 = \vz^\top \left( \sum_{p=1}^r \vc_p \vc_p^\top \right) \vz,
\]
and \(M = \sum_{p=1}^r \vc_p \vc_p^\top\) is symmetric and PSD (a sum of rank-1 PSD matrices). Thus a PSD Gram matrix exists.

Conversely, suppose there exists a PSD Gram matrix \(M\) with \(P(\vx,\vy) = \vz^\top M \vz\). Since \(M \succeq 0\), it admits a Cholesky factorization \(M = C^\top C\) for some matrix \(C \in \R^{r \times mn}\). Let \(\vc_1,\dots,\vc_r \in \R^{mn}\) be the rows of \(C\). Then
\[
P = \vz^\top C^\top C \vz = \sum_{p=1}^r (\vc_p^\top \vz)^2 = \sum_{p=1}^r f_p(\vx,\vy)^2,
\]
where each \(f_p\) is bilinear because \(\vz = \vx \otimes \vy\). Hence \(P\) is SOS, and the number of squares \(r\) is at least the rank of \(M\).

\medskip

\noindent\textbf{Important Nuance: Existence vs. Universality.}
The characterization above states that \emph{there exists} a PSD Gram matrix if and only if \(P\) is SOS. It does \emph{not} say that every Gram matrix representing \(P\) is PSD. In fact, most Gram matrices are not PSD.

The set of all Gram matrices for a fixed \(P\) forms an affine space:
\[
\mathcal{M}_P = \{ M(\Gamma) : \Gamma \} = M_0 + \mathcal{H},
\]
where \(M_0\) is a particular Gram matrix (e.g., the natural one) and \(\mathcal{H}\) is a linear subspace consisting of matrices \(H\) with \(\vz^\top H \vz = 0\) for all \(\vz = \vx \otimes \vy\). The dimension of this affine space is \(\binom{m}{2}\binom{n}{2}\), corresponding to the degrees of freedom in representing the cross terms \(x_i x_k y_j y_l\) with \(i \neq k\) and \(j \neq l\).

If \(P\) is SOS, the intersection \(\mathcal{M}_P \cap \{ \text{PSD matrices} \}\) is nonempty¡ªthere is at least one PSD Gram matrix. However, because we can add skew-symmetric matrices or other nullity matrices to any Gram matrix, most points in \(\mathcal{M}_P\) lie outside the PSD cone. This leads to the following theorem:

\begin{Thm} ({\bf Theorem 7 of \cite{QCX26}}) \label{t6.1}
		Suppose that the $m\times n$ biquadratic form in (\ref{e0}) is SOS. Then we have
			\begin{equation}\label{def:SOS_rank}
		\operatorname{SOS\text{-}rank}(P)=\min_{\Gamma: M(\Gamma)\succeq 0}\;\operatorname{rank}(M(\Gamma)).
		\end{equation}
	\end{Thm}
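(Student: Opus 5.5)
The plan is to prove the equality by two inequalities, relying on the correspondence between SOS decompositions and PSD Gram matrices established just above. Throughout, $M(\Gamma)$ ranges over the affine space $\mathcal{M}_P = M_0 + \mathcal{H}$ of symmetric Gram matrices of $P$. Because $P$ is SOS, the set of PSD points in $\mathcal{M}_P$ is nonempty, so the minimum on the right-hand side is over a nonempty set. Ranks are integers in $\{0,\dots,mn\}$, so this minimum is attained.

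\emph{Step 1 (SOS-rank $\ge$ min rank).} Take an SOS decomposition $P=\sum_{p=1}^r f_p^2$ with $r=\operatorname{SOS\text{-}rank}(P)$. Write $f_p=\vc_p^\top\vz$ as in the forward direction of the characterization. Then $M=\sum_{p=1}^r \vc_p\vc_p^\top$ is a symmetric PSD Gram matrix of $P$, so $M=M(\Gamma)$ for some $\Gamma$. Its rank is at most $r$, since it is a sum of $r$ rank-one matrices. Hence the minimum is at most $r$.

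\emph{Step 2 (SOS-rank $\le$ min rank).} Let $M(\Gamma)\succeq 0$ attain the minimum, with rank $s$. The key refinement over the converse argument above is to factor with exactly $s$ rows. Use the spectral decomposition
\[
M(\Gamma)=\sum_{p=1}^s \lambda_p \mathbf{u}_p\mathbf{u}_p^\top,\qquad \lambda_p>0 .
\]
Set $\vc_p=\sqrt{\lambda_p}\,\mathbf{u}_p$; equivalently, $M=C^\top C$ with $C\in\R^{s\times mn}$. Then $P=\vz^\top M(\Gamma)\vz=\sum_{p=1}^s(\vc_p^\top\vz)^2$. Each $\vc_p^\top(\vx\otimes\vy)$ is a bilinear form in $(\vx,\vy)$, so $P$ is a sum of $s$ squares of bilinear forms. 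Therefore $\operatorname{SOS\text{-}rank}(P)\le s$.

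Combining the two steps gives the stated equality. There is no serious obstacle. The only point needing care is that the factorization in Step 2 must use exactly $\operatorname{rank}(M)$ rows; a generic Cholesky factor with $mn$ rows only shows that $r\ge$ rank for that particular factorization. A secondary point is checking that every symmetric $M$ with $\vz^\top M\vz=P$ is one of the $M(\Gamma)$, and this follows from the definition of $\mathcal{M}_P$.
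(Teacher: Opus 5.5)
Your proof is correct and follows essentially the same route as the Gram-matrix discussion the paper gives before citing the theorem. An SOS decomposition with $r$ squares yields the PSD Gram matrix $\sum_p \vc_p\vc_p^\top$ of rank at most $r$, and conversely a PSD Gram matrix is factored into squares of bilinear forms; your spectral factorization with exactly $\operatorname{rank} M(\Gamma)$ terms makes explicit the step needed for $\operatorname{SOS\text{-}rank}(P)\le\min\operatorname{rank}$, which the paper's Cholesky remark leaves implicit since it only notes that the number of rows is at least the rank.
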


We must search through the affine space \(\mathcal{M}_P\) to find the PSD Gram matrices, and among those, find one of minimal rank. The rank of this optimal PSD Gram matrix is exactly the SOS rank.

\medskip

\noindent\textbf{Geometric Intuition.}
Think of \(\mathcal{M}_P\) as a line (or higher-dimensional plane) in the space of symmetric matrices. The PSD matrices form a convex cone. If \(P\) is SOS, this line intersects the PSD cone¡ªperhaps at a single point, perhaps along a segment. Points on the line away from this intersection are not PSD, yet they still represent the same polynomial \(P\).

\medskip

\noindent\textbf{Example 1: An SOS form with a non-PSD Gram matrix.}
Consider the simple \(2 \times 2\) biquadratic form
\[
P(\vx,\vy) = x_1^2 y_1^2 + x_2^2 y_2^2.
\]
Clearly \(P\) is SOS because \(P = (x_1 y_1)^2 + (x_2 y_2)^2\). With \\
\(\vz = (x_1 y_1, x_1 y_2, x_2 y_1, x_2 y_2)^\top\), one natural Gram matrix is the diagonal matrix
\[
M_0 = \begin{pmatrix}
1 & 0 & 0 & 0 \\
0 & 0 & 0 & 0 \\
0 & 0 & 0 & 0 \\
0 & 0 & 0 & 1
\end{pmatrix},
\]
which is PSD. Now add a skew-symmetric matrix
\[
S = \begin{pmatrix}
0 & 0 & 0 & 0 \\
0 & 0 & 1 & 0 \\
0 & -1 & 0 & 0 \\
0 & 0 & 0 & 0
\end{pmatrix}.
\]
Since \(S^\top = -S\), we have \(\vz^\top S \vz = 0\) for all \(\vz\); hence \(M = M_0 + S\) is also a Gram matrix for \(P\):
\[
M = \begin{pmatrix}
1 & 0 & 0 & 0 \\
0 & 0 & 1 & 0 \\
0 & -1 & 0 & 0 \\
0 & 0 & 0 & 1
\end{pmatrix}.
\]
Checking the \(2 \times 2\) principal submatrix for indices \((2,3)\) gives \(\begin{pmatrix}0 & 1 \\ -1 & 0\end{pmatrix}\), whose determinant is \(-1\); therefore \(M\) is not positive semidefinite. Thus we have exhibited a Gram matrix for an SOS form that is not PSD, illustrating that the existence of a PSD Gram matrix does not imply that all Gram matrices are PSD.

\medskip

\noindent\textbf{Example 2: A PSD non-SOS form (Choi's example) has no PSD Gram matrix.}
Choi \cite{Ch75} discovered the following \(3 \times 3\) biquadratic form:
\[
C(\vx,\vy) = x_1^2 y_1^2 + x_2^2 y_2^2 + x_3^2 y_3^2 + x_1^2 y_2^2 + x_2^2 y_3^2 + x_3^2 y_1^2 - \big( x_1^2 y_3^2 + x_2^2 y_1^2 + x_3^2 y_2^2 \big).
\]
This form is known to be PSD (it is nonnegative for all real \(\vx,\vy\)) but it is \emph{not} SOS. By the fundamental characterization, if there existed a PSD Gram matrix for \(C\), then \(C\) would be SOS. Since \(C\) is not SOS, it follows that \emph{every} Gram matrix representing \(C\) fails to be PSD. Hence the affine space \(\mathcal{M}_C\) contains no positive semidefinite matrices at all. This example shows that for a non-SOS PSD form, the intersection \(\mathcal{M}_P \cap \{\text{PSD matrices}\}\) is empty.

\section{Existing Lower Bounds for BSR(m,n)}

Let \(m \ge n\). We say a biquadratic form is a \textbf{simple biquadratic form} if it contains only distinct terms of the type \(x_i^2 y_j^2\).

For comparison, we recall a basic lower bound that follows from elementary considerations of simple biquadratic forms (see \cite{XCQ26}, Theorem 2.10):
\[
\mathrm{BSR}_{\mathrm{simple}}(m,n) \ge m + n \quad \text{for all } m,n \ge 3.
\]
Consequently, \(\mathrm{BSR}(m,n) \ge m + n\). However, as we shall see in Section 5, this bound is not tight; the Zarankiewicz number \(z(m,n)\) provides a much stronger lower bound.


\section{The Zarankiewicz Number}

Kazimierz Zarankiewicz (1902--1959) was a Polish mathematician and Professor at the Warsaw University of Technology who was interested primarily in topology and graph theory.

\subsection{Definition and Basic Concepts}

The problem of determining the Zarankiewicz number, named after its proposer \cite{Za51}, is a classical problem in extremal graph theory \cite{Bo04}. The Zarankiewicz number \(z(m,n;s,t)\) denotes the maximum possible number of edges in a bipartite graph with parts of sizes \(m\) and \(n\) that does not contain a complete bipartite subgraph \(K_{s,t}\) (with \(s\) vertices in the first part and \(t\) vertices in the second part) as a subgraph \red{\cite{DHS13}.}

In this paper, we are primarily concerned with the case \(s = t = 2\), i.e., forbidding \(K_{2,2}\), which is simply a cycle of length four (a 4-cycle). For brevity, the notation \(z(m,n)\) is often used to denote \(z(m,n;2,2)\).

\begin{Def}
Let \(G = (S,T,E)\) be a bipartite graph with \(|S| = m\) and \(|T| = n\). We say that \(G\) is \(K_{2,2}\)-free (or 4-cycle-free) if it contains no subgraph isomorphic to \(K_{2,2}\). Then
\[
\begin{aligned}
z(m,n) = & z(m,n;2,2)\\
 = & \max\{ |E| : \; G \text{ is a } K_{2,2}\text{-free} \\
&\; \text{bipartite graph with part sizes } m \text{ and } n \}.
\end{aligned}
\]

\end{Def}

\begin{remark}\label{rem:growth}
For small parameters, \(z(m,n)\) may be close to \(m+n\) (e.g., \(z(3,3)=6\), \(z(4,3)=7\)).
However, for larger values the Zarankiewicz number grows faster than linearly.
The classic K\H{o}v\'ari--S\'os--Tur\'an theorem  \cite{KST54}
gives the general upper bound
{$z(p^2+p, p^2)=p^2(p+1)+1$ for prime numbers $p$.}
Constructions using finite projective planes, pioneered by
Reiman \cite{RS65}, show that this bound is tight for infinite families.
In particular,
{
\[
z(m,n) \leq \frac{n}{2} + \frac{1}{2}\sqrt{n^2  + 4mn(m-1)}+1.
\]
}
\end{remark}

\subsection{Known Bounds and Exact Values}

Determining Zarankiewicz numbers exactly is a difficult problem in combinatorics, and exact values are known only for small parameters {\cite{Gu69, RS65}.} 
Some classical results include:

\begin{itemize}
    \item \textbf{Small exact values:}
    \begin{itemize}
        \item \(z(3,3) = 6\) (achieved by the 6-cycle, i.e., the incidence graph of the projective plane of order 2); 
        \item \(z(3,4) = z(4,3) = 7\); 
        \item \(z(4,4) = 9\); 
        \item \(z(5,4) = 10\); 
        \item \(z(5,5) = 12\); 
    \end{itemize}

    \item \textbf{General bounds:} The classic and foundational result in this area is {by Reiman \cite{RS65},}
    \[
    z(m,n) \leq \frac{n}{2} + {\frac{1}{2}\sqrt{n^2  + 4mn(m-1)}+1.}
    \]
    For fixed \(n\) and large \(m\), this theorem yields the
     asymptotic bound:
    \[
    z(m,n) = O(m^{1/2}n + n).
    \]

    \item \textbf{Recent improvements:} Recent work has provided improved upper bounds for many small parameter sets and established new families of closed-form bounds \cite{CHM24,  DHS13}.
\end{itemize}

\subsection{Connection to Biquadratic Forms}

The connection between Zarankiewicz numbers and biquadratic forms arises from the following observation: if a bipartite graph \(G\) contains no 4-cycle, then the corresponding simple biquadratic form \(P_G(\mathbf{x},\mathbf{y}) = \sum_{(i,j)\in E} x_i^2 y_j^2\) has the property that its SOS rank equals the number of edges \(|E|\). This leads naturally to the conjecture that \(\mathrm{BSR}(m,n) = z(m,n)\).

\begin{remark}
The relationship between extremal graph theory and polynomial optimization, particularly through sum-of-squares representations, has emerged as a fruitful area of research. The Zarankiewicz number provides a combinatorial interpretation for the maximum SOS rank of simple biquadratic forms, and further exploration of this connection may yield new insights into both fields.
\end{remark}

\section{The Zarankiewicz Number and The Lower Bounds of $BSR(m, n)$}

Suppose that \(m \ge n \ge 3\). Let \(S = \{1,\dots,m\}\) and \(T = \{1,\dots,n\}\). Consider a bipartite graph \(G = (S,T,E)\), where \(S\) and \(T\) are the two vertex sets and \(E\) is the edge set. For such a graph, we define an \(m\times n\) simple biquadratic form
\[
P_G(\mathbf{x},\mathbf{y}) = \sum_{(i,j)\in E} x_i^2 y_j^2. \tag{5.1}
\]

We have the following theorem.

\begin{Thm} \label{thm:graph-sos}
If \(G\) contains no \(4\)-cycle, then \(\operatorname{SOS-rank}(P_G)=|E|\).
\end{Thm}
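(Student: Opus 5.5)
The upper bound $\operatorname{SOS-rank}(P_G)\le |E|$ is immediate from the defining expression (5.1), since each term $x_i^2y_j^2=(x_iy_j)^2$ is already a square of a bilinear form. So the whole work is the lower bound. By Theorem~\ref{t6.1} it suffices to show that every PSD Gram matrix $M$ of $P_G$ has rank at least $|E|$. Equivalently, if $P_G=\sum_{p=1}^r f_p^2$ with bilinear $f_p$, then $r\ge |E|$.

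\textbf{Step 1: restrict the support of each square.} The plan is to work directly with a decomposition $P_G=\sum_p f_p^2$ and write $f_p=\sum_{i,j} c^{(p)}_{ij}x_iy_j$. The coefficient of $x_i^2y_j^2$ in $P_G$ is $\sum_p (c^{(p)}_{ij})^2$. If $(i,j)\notin E$ this coefficient is $0$, so $c^{(p)}_{ij}=0$ for all $p$. Hence every $f_p$ is supported on the edge set $E$.

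\textbf{Step 2: use the absence of 4-cycles to kill cross terms.} Fix distinct edges $(i,j),(k,l)\in E$.
\begin{itemize}
\item If $i=k$ and $j\ne l$, the monomial $x_i^2y_jy_l$ arises in $\sum_p f_p^2$ only from the products $c^{(p)}_{ij}c^{(p)}_{il}$. Its coefficient $2\sum_p c^{(p)}_{ij}c^{(p)}_{il}$ must vanish, since $P_G$ has no such term.
\item The case $j=l$, $i\ne k$ is symmetric.
\item If $i\ne k$ and $j\ne l$, the monomial $x_ix_ky_jy_l$ receives contributions from the pairs $\{(i,j),(k,l)\}$ and $\{(i,l),(k,j)\}$. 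Its coefficient is $2\sum_p\big(c^{(p)}_{ij}c^{(p)}_{kl}+c^{(p)}_{il}c^{(p)}_{kj}\big)$. Because $G$ has no $4$-cycle, $(i,l)$ and $(k,j)$ cannot both be edges alongside $(i,j)$ and $(k,l)$. So by Step~1 the second product vanishes for every $p$, and the remaining sum $\sum_p c^{(p)}_{ij}c^{(p)}_{kl}$ must be $0$.
\end{itemize}

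\textbf{Step 3: conclude by a Gram argument.} For each edge $e=(i,j)$, define the vector $v_e=(c^{(1)}_{ij},\dots,c^{(r)}_{ij})\in\R^r$. Steps 1 and 2, together with the coefficient of $x_i^2y_j^2$ being $1$, give $\langle v_e,v_{e'}\rangle=\delta_{ee'}$ for all $e,e'\in E$. Thus the $|E|$ vectors $v_e$ are orthonormal in $\R^r$, which forces $r\ge |E|$. Equivalently, the principal submatrix of any PSD Gram matrix indexed by $E$ is the identity, and its off-$E$ rows and columns vanish.

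The main obstacle is Step 2's bookkeeping for the mixed monomial $x_ix_ky_jy_l$. One must check carefully that exactly two coefficient pairs contribute, and that the $4$-cycle-free hypothesis, together with the vanishing from Step~1, eliminates one of them. Note that three of the four pairs $(i,j),(k,l),(i,l),(k,j)$ being edges is allowed, which is why only one product, not both, has to vanish. The remaining steps are routine coefficient comparison.
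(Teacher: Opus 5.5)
Your proof is correct and follows the paper's argument essentially step for step. You confine each square's support to $E$, use the $4$-cycle-free hypothesis to kill the product $c^{(p)}_{il}c^{(p)}_{kj}$ in the mixed-monomial coefficient, and conclude from the $|E|$ orthonormal coefficient vectors in $\mathbb{R}^r$. Your separate treatment of the shared-vertex case, with coefficient $2\sum_p c^{(p)}_{ij}c^{(p)}_{il}$, is slightly more careful than the paper's uniform formula, which double-counts when exactly one of $i=k$, $j=l$ holds (this does not affect the vanishing conclusion).
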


\begin{proof}
The upper bound \(\operatorname{SOS-rank}(P_G)\le |E|\) is immediate because \(P_G\) itself is a sum of \(|E|\) squares \((x_i y_j)^2\).

For the lower bound, assume \(P_G = \sum_{t=1}^{r} L_t^2\) with each \(L_t\) a bilinear form
\(L_t = \sum_{i,j} c_{ij}^{(t)} x_i y_j\).
For every edge \((i,j)\in E\) define the vector
\[
\mathbf{v}_{ij} = (c_{ij}^{(1)},\dots,c_{ij}^{(r)}) \in \mathbb{R}^r.
\]
From the coefficient of \(x_i^2 y_j^2\) we obtain \(\|\mathbf{v}_{ij}\|^2 = 1\). For any pair \((i,j)\notin E\), the coefficient of \(x_i^2 y_j^2\) in \(P_G\) is zero, so \(\sum_{t=1}^r (c_{ij}^{(t)})^2 = 0\), which forces \(c_{ij}^{(t)} = 0\) for all \(t\); hence \(\mathbf{v}_{ij} = \mathbf{0}\).

Now consider any mixed monomial \(x_i x_k y_j y_l\) with \(i\neq k\) or \(j\neq l\). In \(\sum_t L_t^2\) its coefficient equals
\[
2\sum_{t} c_{ij}^{(t)}c_{kl}^{(t)} + 2\sum_{t} c_{il}^{(t)}c_{kj}^{(t)} = 2\bigl(\mathbf{v}_{ij}\!\cdot\!\mathbf{v}_{kl} + \mathbf{v}_{il}\!\cdot\!\mathbf{v}_{kj}\bigr).
\]
Because \(P_G\) contains no such mixed term, we must have
\begin{equation} \label{eq:cycle-condition}
\mathbf{v}_{ij}\!\cdot\!\mathbf{v}_{kl} + \mathbf{v}_{il}\!\cdot\!\mathbf{v}_{kj} = 0
\qquad\text{whenever }(i,j),(k,l)\in E.
\end{equation}

If \(G\) contains no \(4\)-cycle, then for any distinct edges \((i,j)\) and \((k,l)\) with \(i\neq k\) and \(j\neq l\), at most one of the two pairs \(\{(i,j),(k,l)\}\) and \(\{(i,l),(k,j)\}\) can consist entirely of edges; otherwise the four vertices would form a \(4\)-cycle \(i\!-\!j\!-\!k\!-\!l\!-\!i\). Therefore, in \eqref{eq:cycle-condition}, one of the two dot products involves a vector corresponding to a non?edge, which is the zero vector. Hence the equation reduces to \(\mathbf{v}_{ij}\!\cdot\!\mathbf{v}_{kl}=0\) for all distinct edges with \(i\neq k,\;j\neq l\).

For the cases where edges share a vertex (\(i=k\) or \(j=l\)), we obtain directly from \eqref{eq:cycle-condition} by setting appropriate indices:
\[
\mathbf{v}_{ij}\!\cdot\!\mathbf{v}_{il}=0\;(j\neq l),\qquad
\mathbf{v}_{ij}\!\cdot\!\mathbf{v}_{kj}=0\;(i\neq k).
\]

Thus all vectors \(\{\mathbf{v}_{ij} : (i,j)\in E\}\) are pairwise orthogonal and each has unit length. Consequently \(r \ge |E|\), i.e. \(\operatorname{SOS-rank}(P_G) \ge |E|\). Together with the upper bound we obtain equality.
\end{proof}

\medskip

\noindent\textbf{Conjecture.}
For \(m,n\ge 2\) let \(z(m,n)\) denote the maximum number of edges in a bipartite graph with parts of sizes \(m\) and \(n\) that contains no \(4\)-cycle (the Zarankiewicz number \(z(m,n;2,2)\)). We conjecture that
\[
\operatorname{BSR}(m,n) = z(m,n).
\]

\noindent\textbf{Examples and remarks.}
\begin{itemize}
    \item For \((m,n)=(3,3)\) we have \(z(3,3)=6\) (e.g. the graph with edges
    \((1,1),(2,2),(3,3),(1,2),(2,3),(3,1)\)). This matches the known result \(\operatorname{BSR}(3,3)=6\).

    \item For \((m,n)=(4,3)\) the Zarankiewicz number is \(z(4,3)=7\).
    A concrete \(C_4\)-free graph achieving this bound has parts
    \(S=\{1,2,3,4\}\), \(T=\{1,2,3\}\) and edge set
    \[
    E = \{(1,1),(2,1),(3,1),\;(1,2),(4,2),\;(2,3),(4,3)\}.
    \]
    (One verifies directly that no two vertices in \(S\) share two common neighbours in \(T\).)
    Hence there exists a simple biquadratic form on \(4\times 3\) variables with SOS rank \(7\).
    Consequently \(\operatorname{BSR}(4,3) \ge 7\).
    This matches the elementary bound \(m+n = 7\) from Section 3.

    \item Whether \(\operatorname{BSR}(4,3)\) equals \(7\) or can be larger remains an open question.
    If the conjecture holds, then \(\operatorname{BSR}(4,3)=7\); otherwise there exist non-simple forms
    with SOS rank strictly greater than the Zarankiewicz number.
\end{itemize}

\medskip
	
	\bigskip
	
	\noindent\textbf{Acknowledgement}
	I am thankful to Professors Greg Blekherman and Guangzhou Chen for the discussions.  This work was partially supported by Research Center for Intelligent Operations Research, The Hong Kong Polytechnic University (4-ZZT8), the National Natural Science Foundation of China (Nos. 12471282 and 12131004),  and Jiangsu Provincial Scientific Research Center of Applied Mathematics (Grant No. BK20233002).
	
	\medskip
	
	\noindent\textbf{Data availability}
	No datasets were generated or analysed during the current study.
	
	\medskip
	
	\noindent\textbf{Conflict of interest} The authors declare no conflict of interest.

\end{document}